\newtheorem{theorem}{Theorem}
\newtheorem{lemma}{Lemma}
\newtheorem{claim}{Claim}
\title{Recolouring weakly chordal graphs and the complement of triangle-free graphs}
\author{
Owen Merkel\thanks{Department of Mathematics, Wilfrid Laurier University, Waterloo, Ontario, Canada.  Email: \texttt{owenmerkel@gmail.com}.}
}
\begin{document}
\maketitle
\begin{abstract}
For a graph $G$, the $k$-recolouring graph $\mathcal{R}_k(G)$ is the graph whose vertices are the $k$-colourings of $G$ and two colourings are joined by an edge if they differ in colour on exactly one vertex. We prove that for all $n \ge 1$, there exists a $k$-colourable weakly chordal graph $G$ where $\mathcal{R}_{k+n}(G)$ is disconnected, answering an open question of Feghali and Fiala. We also show that for every $k$-colourable $3K_1$-free graph $G$, $\mathcal{R}_{k+1}(G)$ is connected with diameter at most $4|V(G)|$.
\end{abstract}

\section{Introduction}
Let $G$ be a finite simple graph with vertex-set $V(G)$ and edge-set $E(G)$. For a positive integer $k$, a \emph{$k$-colouring} of $G$ is a mapping $\alpha \colon V(G) \to \{1, 2, \ldots, k\}$ such that $\alpha(u) \neq \alpha(v)$ whenever $uv \in E(G)$. The \emph{$k$-recolouring graph}, denoted $\mathcal{R}_k(G)$, is the graph whose vertices are the $k$-colourings of $G$ and two colourings are joined by an edge if they differ in colour on exactly one vertex. We say that $G$ is \emph{$k$-mixing} if $\mathcal{R}_k(G)$ is connected. If $G$ is $k$-mixing, the \emph{$k$-recolouring diameter} of $G$ is the diameter of $\mathcal{R}_k(G)$. We say that $G$ is \emph{quadratically $k$-mixing} if the $k$-recolouring diameter of $G$ is $O(|V(G)|^2)$.

Bonamy, Johnson, Lignos, Patel, and Paulusma \cite{bonamy2014} showed that a $k$-colourable chordal or chordal bipartite graph is quadratically $(k+1)$-mixing. The authors also asked whether this statement holds more generally for perfect graphs. This was answered negatively by Bonamy and Bousquet \cite{bonamy2018} using an example of Cereceda, van den Heuvel, and Johnson \cite{cereceda2008} who showed that for all $k \ge 3$, there exists a bipartite graph that is not $k$-mixing. This started an investigation into other classes of perfect graphs which have this special property: chordal and chordal bipartite \cite{bonamy2014}, $P_4$-free \cite{bonamy2018}, distance-hereditary \cite{bonamy20142}, $P_4$-sparse \cite{biedl2020}, co--chordal, and 3-colourable ($P_5$, $\overline{P_5}$, $C_5$)-free \cite{feghali2020}.

The property of being $(k+1)$-mixing does not extend to the class of weakly chordal graphs. Feghali and Fiala \cite{feghali2020} showed that for all $k \ge 3$, there exists a $k$-colourable weakly chordal graph that is not $(k+1)$-mixing. The authors left as an open problem whether there exists an integer $l > k+1$ for which every $k$-colourable weakly chordal graph is $l$-mixing. We answer this question in the negative with the following theorem.

\begin{theorem}
\label{thm:main}
For all $n \ge 1$, there exists a $k$-colourable weakly chordal graph that is not $(k+n)$-mixing.
\end{theorem}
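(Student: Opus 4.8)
The plan is to reduce the problem to producing, for each $n$, a weakly chordal graph $G$ with small chromatic number that nonetheless carries a \emph{frozen} $(k+n)$-colouring. Call a colouring $\alpha$ \emph{frozen} if no vertex can be recoloured, that is, for every vertex $v$ and every colour $c \ne \alpha(v)$ some neighbour of $v$ has colour $c$; equivalently, in the partition into colour classes every vertex has a neighbour in each of the other classes. A frozen $(k+n)$-colouring is an isolated vertex of $\mathcal{R}_{k+n}(G)$, so if $G$ is $k$-colourable with $k < k+n$ then $\mathcal{R}_{k+n}(G)$ has at least two vertices, one of them isolated, and is therefore disconnected. Thus it suffices to build, for each $n$, a $k$-colourable weakly chordal graph admitting a frozen $(k+n)$-colouring; note also that a frozen colouring must be surjective onto its palette, since an unused colour would make every vertex recolourable.

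The engine I would use is the join operation, via three facts. First, if $G_1$ and $G_2$ are weakly chordal then so is $G_1 + G_2$: its complement $\overline{G_1}\cup\overline{G_2}$ is a disjoint union and hence has no long induced cycle, while any induced cycle of $G_1+G_2$ meeting both sides would, from the complete bipartite connection, contain an induced $K_{2,2}$ (a chord) or a vertex of degree $\ge 3$, so it cannot have length $\ge 5$. Second, $\chi(G_1+G_2)=\chi(G_1)+\chi(G_2)$. Third, if $G_1,G_2$ carry frozen colourings on disjoint palettes, then colouring each side by its own frozen colouring is frozen on the join, because through the join edges every vertex additionally sees all colours used on the other side. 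Consequently, if $F$ is a weakly chordal graph with $\chi(F)=a$ admitting a frozen $(a+1)$-colouring, then the $n$-fold join $G=F_1+\cdots+F_n$ of disjoint copies, each coloured on its own palette of $a+1$ colours, is weakly chordal, has $\chi(G)=na$, and carries a frozen $(n(a+1))$-colouring. Writing $k=na$ gives $n(a+1)=k+n$, exactly the $k$-colourable weakly chordal graph with a frozen $(k+n)$-colouring that we need.

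Everything therefore reduces to the base case $n=1$: a single weakly chordal $F$ with $\chi(F)=a$ and a frozen $(a+1)$-colouring. This is the heart of the matter and the step I expect to be the main obstacle. Since a bipartite weakly chordal graph is chordal bipartite and hence mixes with few colours, $F$ cannot be bipartite, so the smallest case is $a=3$: a $3$-colourable weakly chordal graph with a frozen $4$-colouring. Its existence is essentially the $k=3$ case of the Feghali--Fiala theorem quoted above, and I would either extract a frozen $4$-colouring from their non-$4$-mixing weakly chordal example or construct $F$ directly. For a direct construction I would begin from a ``crown''-type bipartite graph on the four colour classes $\{1,2,3,4\}$, on which the class colouring is already frozen, and then add edges and vertices to destroy the induced copies of $C_6$ that spoil weak chordality, arranging that the new structure creates no long induced cycle in $F$ or in $\overline F$, keeps $\chi(F)=3$, and leaves each of the four classes dominating in each other. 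Verifying these three requirements at once — the frozen condition, weak chordality of both $F$ and $\overline F$, and $\chi(F)=3$ — is the delicate part of the whole argument.

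Finally, the frozen $(k+n)$-colouring of $G$ is an isolated vertex of $\mathcal{R}_{k+n}(G)$ while the $k$-colourings of $G$ supply further vertices, so $\mathcal{R}_{k+n}(G)$ is disconnected and $G$ is not $(k+n)$-mixing, which proves Theorem~\ref{thm:main} with $k=3n$. If one wants the conclusion for every $k \ge 3n$ rather than for $k=3n$ alone, one can replace $G$ by $G + K_t$: joining with a clique on $t$ fresh colours adds $t$ to both the chromatic number and the number of colours of the frozen colouring, leaving the gap $n$ unchanged.
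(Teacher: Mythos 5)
Your overall architecture matches the paper's: certify non-$(k+n)$-mixing by exhibiting a frozen $(k+n)$-colouring of a $k$-chromatic weakly chordal graph, and obtain the gap $n$ by composing $n$ copies of a base example. Your composition step is genuinely different from the paper's and, as far as it goes, correct: the join of two weakly chordal graphs is weakly chordal (your hole/antihole argument is sound, and this is also a special case of the paper's Theorem~\ref{thm:subs}, since $G_1 + G_2$ arises by substituting $G_1$ and $G_2$ into the two vertices of $K_2$), chromatic numbers add under joins, and frozen colourings on disjoint palettes combine to a frozen colouring of the join because frozen colourings are necessarily surjective. The paper instead substitutes $G_{n-1}$ into four vertices of its base graph, obtaining $\chi(G_n)=2n+1$ with a frozen $(3n+1)$-colouring, whereas your $n$-fold join gives $\chi = 3n$ with a frozen $4n$-colouring; both achieve gap exactly $n$, and yours is arguably the cleaner amplification.

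The genuine gap is the base case, which you yourself call ``the heart of the matter'' and then do not supply: a weakly chordal graph $F$ with $\chi(F)=3$ admitting a frozen $4$-colouring. Citing the Feghali--Fiala theorem does not close this gap, because their stated result is only that some $3$-colourable weakly chordal graph is not $4$-mixing, and a disconnected recolouring graph need not contain an isolated vertex; so ``extract a frozen $4$-colouring from their example'' requires inspecting and verifying the internals of their construction, not invoking the statement. This matters for your argument in particular, since your join machinery genuinely needs the frozen property --- non-mixing of the factors does not obviously transfer to the join. Your fallback direct construction (a crown-type bipartite pattern with edges and vertices added to destroy induced copies of $C_6$ while preserving $\chi=3$ and weak chordality of both $F$ and $\overline{F}$) is a plan rather than a proof; none of the three required properties is verified. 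The paper resolves exactly this point with an explicit eight-vertex graph $G_1$ (its Figure~\ref{fig:g1}), given together with an explicit $3$-colouring and an explicit frozen $4$-colouring, both checkable by hand. With such an $F$ in hand your argument would be complete; without it, the proposal does not prove the theorem.
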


This question has also been investigated for the class of graphs defined by forbidding an induced path. That is, determining the values of $t$ for which a $k$-colourable $P_t$-free graph is $(k+1)$-mixing. Bonamy and Bousquet \cite{bonamy2018} showed that a $k$-colourable $P_4$-free graph is $(k+1)$-mixing, and using the same example of Cereceda, van den Heuvel, and Johnson \cite{cereceda2008}, showed that for all $k \ge 3$ and $t \ge 6$, there is a $k$-colourable $P_t$-free graph that is not $(k+1)$-mixing. It was also mistakenly reported in \cite{bonamy2018} that there exists a $k$-colourable $P_5$-free graph that is not $(k+1)$-mixing (see \cite{merkel2020}). This leaves $t=5$ as the last open case.

In this paper, we investigate this question for a subclass of $P_5$-free graphs, namely $3K_1$-free graphs. This class of graphs also includes the perfect class of co--bipartite graphs.

\begin{theorem}
\label{thm:3k1}
If $G$ is a $k$-colourable $3K_1$-free graph, then $G$ is $(k+1)$-mixing and the $(k+1)$-recolouring diameter of $G$ is at most $4|V(G)|$.
\end{theorem}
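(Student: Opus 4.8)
The plan is to exploit the defining structural feature of $3K_1$-free graphs: such a graph $G$ has no independent set of size three, so every colour class of every proper colouring has size at most $2$. In particular $\chi(G) \ge |V(G)|/2$, and since $G$ is $k$-colourable this forces $|V(G)| \le 2k$. I would fix once and for all a proper $k$-colouring $\beta$ of $G$ using the colours $\{1,\dots,k\}$, so that the colour $k+1$ is left unused by $\beta$; call it the \emph{spare} colour. Since distance in $\mathcal{R}_{k+1}(G)$ is symmetric, it suffices to show that every $(k+1)$-colouring $\alpha$ lies within distance $2|V(G)|$ of $\beta$: the triangle inequality $d(\gamma,\delta)\le d(\gamma,\beta)+d(\beta,\delta)$ then gives diameter at most $4|V(G)|$, and in particular connectivity, establishing that $G$ is $(k+1)$-mixing.

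The core of the argument is to place the vertices at their target colours one at a time, processing them in an order $v_1,\dots,v_n$ and maintaining the invariant that once a vertex has been assigned its colour $\beta(v_i)$ it is never recoloured again. Two observations make the bookkeeping work. First, an already-placed vertex can never block a later placement: if a placed vertex $u$ with colour $\beta(u)$ were a neighbour of some $v$ with $\beta(v)=\beta(u)$, this would contradict the properness of $\beta$. Hence every obstruction to colouring $v_i$ with $c:=\beta(v_i)$ is an as-yet-unplaced neighbour of $v_i$ currently coloured $c$, and there are at most two of them, since the colour class of $c$ has size at most $2$. Second, I clear such an obstruction $w$ by recolouring it to the spare colour $k+1$; once $w$ sits on the spare colour it can never again obstruct a placement, because every target colour $\beta(\cdot)$ lies in $\{1,\dots,k\}$. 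Consequently each vertex is parked on the spare colour at most once and placed at most once, so it is recoloured at most twice, and the whole transformation $\alpha\to\beta$ uses at most $2|V(G)|$ single-vertex recolourings.

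The main obstacle is precisely to guarantee that the parking move is always available, and here the bound $\alpha(G)\le 2$ cuts both ways. On one hand it is exactly what makes a parked vertex inert; on the other hand it means the spare colour class can hold at most two vertices at once, so I cannot park an obstruction $w$ whenever both spare slots are already occupied by neighbours of $w$. Resolving this is the technical heart of the proof. I would control the processing order so that the spare class is emptied — its one or two occupants placed at their own targets — before it can block a future obstruction, using the structural fact that in a $3K_1$-free graph the non-neighbourhood of every vertex is a clique (any two non-adjacent non-neighbours would form a $3K_1$ with the vertex), which I would leverage to locate an alternative temporary colour when the spare slot is genuinely full. In the worst case, where both occupants $p,q$ of the spare class are neighbours of the obstruction $w$, I would use that $\{p,q\}$ is independent to perform a short local exchange, freeing a spare slot by first placing one of $p,q$, and then verify that this costs only a constant number of extra recolourings amortised against the vertices thereby placed, so that the global count remains $2|V(G)|$ and the claimed diameter bound of $4|V(G)|$ follows.
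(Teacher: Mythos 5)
Your global scheme (route every $(k+1)$-colouring to a fixed hub colouring and apply the triangle inequality) is sound and is in the same spirit as the paper, which routes both $\alpha$ and $\beta$ to colourings inducing the partition of a fixed $\chi(G)$-colouring $\gamma$ and then applies the Renaming Lemma (Lemma \ref{lem:recolour}). But the engine of your argument --- sequentially placing vertices at their target colours while parking obstructions on the spare colour $k+1$ --- has a genuine gap, and it is exactly the step you yourself call ``the technical heart'': you never prove that the parking move, or any substitute for it, is always available. The fallback you sketch is circular in the worst case. Suppose the spare class is $\{p,q\}$ and you try to free it by placing $p$ at $\beta(p)$. Any obstruction $w_p$ to this placement is an unplaced neighbour of $p$ currently coloured $\beta(p)$, and $w_p$ cannot be parked either, because $p$ --- a neighbour of $w_p$ --- is sitting on the spare colour. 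So ``first placing one of $p,q$'' runs into precisely the deadlock it is meant to resolve, and nothing in your proposal breaks the cycle. Your other escape, moving an obstruction to ``an alternative temporary colour'' in $\{1,\dots,k\}$, destroys the invariant on which everything rests: a vertex temporarily sitting on colour $d$ can later obstruct the placement of a vertex whose target is $d$, so both termination and the bound of two recolourings per vertex are lost, and the claimed amortisation is asserted rather than proved. (The fact you cite about non-neighbourhoods being cliques is true but is never actually used in your sketch.)

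What is missing is a guarantee, valid at \emph{every} stage of the process, that some colour is essentially free --- and your spare colour, fixed once at the start, does not have this property, as the deadlock shows. The paper obtains such a guarantee by counting (Lemma \ref{lem:useful}): since every colour class of a $3K_1$-free graph has at most two vertices, $|V(G)| \le 2\chi(G) \le 2k$, so in \emph{any} $(k+1)$-colouring some colour appears on at most one vertex. This constantly refreshed ``almost-free'' colour drives an induction on $\chi(G)$: recolour the class of $\gamma$ containing that single vertex onto that colour (at most one move per vertex of the class), then delete both the class and the colour and recurse; because the deleted colour is never used again, the later stages cannot interfere with the earlier ones. If you want to salvage your sequential scheme, you need to replace the fixed spare colour by a colour supplied anew at each step by such a counting argument, or prove an availability statement of comparable strength; as written, the proof does not go through.
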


The proof of Theorem \ref{thm:3k1} leads to a polynomial time algorithm to find a path of length at most $4|V(G)|$ between any two $(k+1)$-colourings of $G$ in the recolouring graph.

The rest of the paper is organized as follows. In Section \ref{sec:pre} we give definitions and notation used throughout the paper. We prove Theorem \ref{thm:main} in Section \ref{sec:wc} and we prove Theorem \ref{thm:3k1} and in Section \ref{sec:3k1}. We end with some discussion on future work in Section \ref{sec:conc}.

\section{Preliminaries}
\label{sec:pre}

For a graph $G$, a \emph{clique} of $G$ is a set of pairwise adjacent vertices and a \emph{stable set} is a set of pairwise non-adjacent vertices. A graph $G$ is \emph{$3K_1$-free} if the maximum number of vertices in a stable set of $G$ is at most 2. The \emph{clique number} of $G$, denoted by $\omega(G)$, is the maximum number of vertices in a clique of $G$. The \emph{chromatic number} of $G$, denoted by $\chi(G)$, is the minimum $k$ such that $G$ is $k$-colourable. Clearly, $\chi(G) \ge \omega(G)$. A graph $G$ is perfect if for all induced subgraphs $H$ of $G$, $\chi(H) = \omega(H)$.

The \emph{complement} of $G$, denoted $\overline{G}$, is the graph with vertex-set $V(G)$ such that $uv \in E(\overline{G})$  exactly when $uv \notin E(G)$. A graph is \emph{bipartite} if its vertices can be partitioned into two stable sets and a graph is \emph{co--bipartite} if it is the complement of a bipartite graph. A \emph{hole} is a chordless cycle on at least five vertices and an \emph{antihole} is the complement of a hole. A hole is \emph{even} or \emph{odd} if it has an even or odd number of vertices, respectively. For a set of graphs $\mathcal{H}$, we say that $G$ is $\mathcal{H}$-free if $G$ does not contain an induced subgraph isomorphic to any graph in $\mathcal{H}$. A graph is perfect if and only if it is (odd hole, odd antihole)-free \cite{SPGT}. A graph is \emph{weakly chordal} if it is (hole, antihole)-free. Clearly, a graph $G$ is weakly chordal if and only if $\overline{G}$ is weakly chordal.

For a vertex $v \in V(G)$, the \emph{open neighbourhood} of $v$ is the set of vertices adjacent to $v$ in $G$. The \emph{closed neighbourhood of $v$} is the set of vertices adjacent to $v$ in $G$ together with $v$. For $X,Y \subseteq V(G)$, we say that $X$ is \emph{complete} to $Y$ if every vertex in $X$ is adjacent to every vertex in $Y$. If no vertex of $X$ is adjacent to a vertex of $Y$, we say that $X$ is \emph{anticomplete} to $Y$. Let $G$ and $H$ be vertex-disjoint graphs and let $v \in V(G)$. By \emph{substituting} $H$ for the vertex $v$ of $G$, we mean taking the graph $G-v$ and adding an edge between every vertex of $H$ and every vertex of $G-v$ that is adjacent to $v$ in $G$.

For a colouring $\alpha$ of $G$ and $X \subseteq V(G)$, we say that the colour $c$ \emph{appears} in $X$ if $\alpha(x) = c$ for some $x \in X$.
A $k$-colouring of a graph $G$ is called \emph{frozen} if it is an isolated vertex in the recolouring graph $\mathcal{R}_k(G)$. In other words, for every vertex $v \in V(G)$, each of the $k$ colours appears in the closed neighbourhood of $v$.

\section{Frozen colourings of weakly chordal graphs}
\label{sec:wc}

\begin{figure}
\centering
\begin{tikzpicture}[scale=0.35]
\tikzstyle{vertex}=[circle, draw, fill=black, inner sep=0pt, minimum size=5pt]

    \node[vertex, label=left:2](1) at (0,0) {};
    \node[vertex, label=above:1](2) at (5,5) {};
    \node[vertex, label=below:3](3) at (5,2.5) {};
	\node[vertex, label=right:1](4) at (2.5,0) {};
	\node[vertex, label=above:3](5) at (5,-2.5) {};
	\node[vertex, label=below:1](6) at (5,-5) {};
	\node[vertex, label=left:1](7) at (7.5,0) {};
	\node[vertex, label=right:2](8) at (10,0) {};
	
	\draw(1)--(2);
    \draw(1)--(3);
    \draw(1)--(4);
    \draw(1)--(5);
    \draw(1)--(6);
    \draw(2)--(3);
    \draw(2)--(8);
    \draw(3)--(4);
    \draw(3)--(7);
    \draw(3)--(8);
    \draw(4)--(5);
    \draw(5)--(6);
    \draw(5)--(7);
    \draw(5)--(8);
    \draw(6)--(8);
    \draw(7)--(8);
    
\end{tikzpicture}
\hspace{5mm}
\begin{tikzpicture}[scale=0.35]
\tikzstyle{vertex}=[circle, draw, fill=black, inner sep=0pt, minimum size=5pt]

    \node[vertex, label=left:1](1) at (0,0) {};
    \node[vertex, label=above:4](2) at (5,5) {};
    \node[vertex, label=below:2](3) at (5,2.5) {};
	\node[vertex, label=right:3](4) at (2.5,0) {};
	\node[vertex, label=above:4](5) at (5,-2.5) {};
	\node[vertex, label=below:2](6) at (5,-5) {};
	\node[vertex, label=left:1](7) at (7.5,0) {};
	\node[vertex, label=right:3](8) at (10,0) {};
	
	\draw(1)--(2);
    \draw(1)--(3);
    \draw(1)--(4);
    \draw(1)--(5);
    \draw(1)--(6);
    \draw(2)--(3);
    \draw(2)--(8);
    \draw(3)--(4);
    \draw(3)--(7);
    \draw(3)--(8);
    \draw(4)--(5);
    \draw(5)--(6);
    \draw(5)--(7);
    \draw(5)--(8);
    \draw(6)--(8);
    \draw(7)--(8);
    
\end{tikzpicture}
\caption{A 3-colouring and frozen 4-colouring of $G_1$.}
\label{fig:g1}
\end{figure}
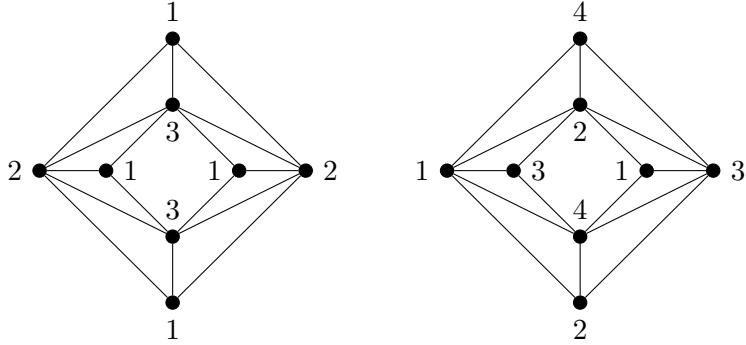

In this section we prove Theorem \ref{thm:main}. One technique to prove that a graph $G$ is not $k$-mixing is to exhibit a frozen $k$-colouring of $G$. We construct a family of graphs $\{G_n \mid n \ge 1\}$ such that $G_n$ is a $k$-colourable weakly chordal graph that has a frozen $(k+n)$-colouring. See Figure \ref{fig:g1} for a 3-colouring and a frozen 4-colouring of $G_1$.  For $n \ge 2$, we recursively construct $G_n$ by substituting $G_{n-1}$ into four vertices of $G_1$ (see Figure \ref{fig:construction}).

We first prove that substituting a weakly chordal graph for some vertex of a weakly chordal graph results in a weakly chordal graph. We note that there might be a proof of this in the literature, and for example, Lov\'{a}sz proved an analogous theorem for perfect graphs \cite{lovasz1972}.

\begin{theorem}
\label{thm:subs}
Substituting a weakly chordal graph for some vertex of a weakly chordal graph results in a weakly chordal graph.
\end{theorem}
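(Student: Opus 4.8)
The plan is to prove the contrapositive: if the result of substituting a weakly chordal graph $H$ for a vertex $v$ of a weakly chordal graph $G$ contains an induced hole or antihole, then either $G$ or $H$ already contained one. Let me set up notation. Let $G'$ be obtained by substituting $H$ for $v$ in $G$, so $V(G') = (V(G) \setminus \{v\}) \cup V(H)$, where every vertex of $H$ is adjacent to exactly the vertices of $G$ that were adjacent to $v$. The key structural observation is that the vertices of $H$ form a \emph{module} in $G'$: every vertex outside $V(H)$ is either complete or anticomplete to all of $V(H)$. Equivalently, contracting $V(H)$ to a single vertex recovers (an induced subgraph of) $G$.

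Let me think about holes first. Suppose $C$ is an induced hole in $G'$. Since a hole is chordless, any two vertices of $H$ appearing in $C$ would need a very restricted neighbourhood relationship. Because $V(H)$ is a module, if $C$ contained two vertices $h_1, h_2 \in V(H)$, they would have identical neighbourhoods outside $V(H)$ — but in a chordless cycle of length $\ge 5$, no two vertices have the same set of neighbours among the remaining cycle vertices (each vertex in a hole has exactly two neighbours on the cycle, and distinct vertices have distinct neighbour-pairs), so the only way two module vertices can coexist in $C$ is if they are adjacent and occupy consecutive positions with a common outside-structure. I would argue that $C$ meets $V(H)$ in at most one vertex, or in a set that can be collapsed: replacing all of $C \cap V(H)$ by a single vertex yields a hole in $G$ (using that the $H$-part of $C$, being an induced subpath of a chordless cycle lying inside a module, must have length $0$ or $1$ — and length $\ge 1$ forces an edge inside $H$ whose endpoints have the same external neighbours, collapsing to give a hole in $G$ unless it creates a chord). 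The hard part is handling the case where $C$ passes through $H$ along a nontrivial path; here I would use that the two ``entry/exit'' vertices of $C$ adjacent to the $H$-segment are complete to all of $H$, which combined with chordlessness pins down the length of the $H$-segment and lets me either collapse to a hole in $G$ or exhibit a hole in $H$.

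For antiholes, the cleanest route is the complementation symmetry already noted in the excerpt: $G$ is weakly chordal iff $\overline{G}$ is. Substituting $H$ for $v$ in $G$ and then complementing is essentially the same as substituting $\overline{H}$ for $v$ in $\overline{G}$ — the module structure is self-complementary, since a set that is complete-or-anticomplete to each outside vertex remains so under complementation. So an induced antihole in $G'$ corresponds to an induced hole in $\overline{G'}$, which is the substitution of $\overline{H}$ into $\overline{G}$; applying the hole argument there (both $\overline{G}$ and $\overline{H}$ are weakly chordal) yields a hole in one of them, hence an antihole in $G$ or $H$. This reduces the antihole case entirely to the hole case, so I only need to carry out the structural argument once.

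\textbf{Anticipated main obstacle.} The delicate point is the combinatorial case analysis for a hole that genuinely traverses $V(H)$ over more than one vertex. I must verify that the module condition forbids long $H$-segments in a chordless cycle: if $h_1, \dots, h_m$ with $m \ge 2$ are consecutive cycle vertices inside $H$, their two cycle-neighbours outside $H$ (call them $a, b$) are both complete to $V(H)$, so $a$ is adjacent to $h_2, \dots, h_m$, producing chords unless $m$ is small. Pinning down exactly which $m$ survive, and showing the collapse produces a genuine (chordless, length $\ge 5$) hole in $G$ rather than a shorter cycle or an edge, is where the care lies; I expect the conclusion to be that $C \cap V(H)$ has at most two vertices and that these collapse cleanly, but the bookkeeping to rule out short cycles is the crux.
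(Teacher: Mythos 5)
Your overall strategy coincides with the paper's: treat holes directly using the fact that $V(H)$ is a module in $G'$, and reduce antiholes to holes by complementation. The complementation half you carry out correctly and completely (the observation that $\overline{G'}$ is exactly the substitution of $\overline{H}$ for $v$ in $\overline{G}$ is the same one the paper uses). The problem is the hole case: what you have written there is a plan, not a proof. The entire mathematical content of the theorem is the case analysis showing that a chordless cycle $C$ of length at least $5$ in $G'$ cannot meet $V(H)$ in more than a trivial way, and you explicitly defer it (``I would argue'', ``pins down the length of the $H$-segment'', ``I expect the conclusion to be'', ``the bookkeeping \dots is the crux''). Those deferred steps are precisely where the theorem lives, so as it stands the proposal has a genuine gap at its decisive point.

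Moreover, your anticipated endpoint is not where the analysis actually lands, which indicates the gap is substantive rather than merely expository. The correct conclusion is that a hole $C$ of $G'$ not entirely contained in $V(H)$ meets $V(H)$ in at most \emph{one} vertex, not two, and hence transfers to a hole of $G$ simply by renaming that vertex $v$ --- no collapsing of edges is ever needed, so your worry about the collapse producing a short cycle is a phantom problem. The two-vertex configurations you leave open are in fact impossible, each by a one-line use of the module property: (i) if $h_1,h_2 \in C \cap V(H)$ are consecutive on $C$, take a maximal segment of $C$ inside $V(H)$; since $C \not\subseteq V(H)$ its endpoint $h_1$ has a $C$-neighbour $a \notin V(H)$, and $a$ is then complete to $V(H)$, so $ah_2$ is a chord of $C$; (ii) if no two vertices of $C \cap V(H)$ are consecutive, then both $C$-neighbours of $h_1$ lie outside $V(H)$, are complete to $V(H)$, and by chordlessness must be exactly the two $C$-neighbours of $h_2$, so $h_1$ and $h_2$ share both cycle-neighbours --- impossible in a cycle on at least $5$ vertices. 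This is essentially the analysis the paper performs (phrased there in the contrapositive: a hole would have to contain at least two vertices of the substituted graph and an outside vertex adjacent to both, which the module property then turns into a chord or an induced $C_4$). Until you supply arguments (i) and (ii), or their equivalent, the proof is incomplete.
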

\begin{proof}
Let $G_1$ and $G_2$ be vertex-disjoint weakly chordal graphs and let $v \in V(G_1)$. Let $G$ be the graph obtained by substituting $G_2$ for the vertex $v$ of $G_1$.

By contradiction, suppose $G$ contains a hole $H$. Then $H$ must contain at least 2 vertices $v_1, v_2$ of $G_2$ since $G_1$ is a weakly chordal graph. Furthermore, since $G_2$ is a weakly chordal graph, $H$ must contain at least one vertex $x$ in $G_1$ that is either adjacent to $v_1$ or $v_2$ in $G$. But any vertex of $G - G_2$ that has a neighbour in $G_2$ is complete to $G_2$. So $x$ must be adjacent to both $v_1$ and $v_2$. Since $x$ can have at most two neighbours in $H$ and since $H$ is a hole, $H$ cannot contain any more neighbours of $x$. Then $H$ cannot contain another vertex from $G_2$ since $x$ is complete to $G_2$. But any other vertex of $H$ adjacent to $v_1$ or $v_2$ must be adjacent to both $v_1$ and $v_2$, so $H$ cannot be a hole, a contradiction.

Now suppose that $G$ contains an antihole. Note that $\overline{G}$ is obtained by substituting the weakly chordal graph $\overline{G_2}$ into the vertex $v$ of the weakly chordal graph $\overline{G_1}$. But since $G$ contains an antihole, $\overline{G}$ contains a hole, a contradiction.
\end{proof}

\begin{lemma}
\label{lem:wc}
For all $n \ge 1$, $G_n$ is a weakly chordal graph.
\end{lemma}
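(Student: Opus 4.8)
The plan is to prove Lemma \ref{lem:wc} by induction on $n$, using the recursive construction of the family $\{G_n\}$ together with the substitution result from Theorem \ref{thm:subs}. The base case $n=1$ is the graph $G_1$ depicted in Figure \ref{fig:g1}; I would dispose of it by direct inspection, verifying that $G_1$ contains no induced hole and no induced antihole. Since $G_1$ has only eight vertices, an antihole would be an antihole on at least five vertices, hence a $\overline{C_5}$ (which is itself a $C_5$) or larger, and a hole would be an induced $C_5$, $C_6$, $C_7$, or $C_8$. With so few vertices this is a finite check that can be carried out by hand or by appealing to the complement symmetry $G$ is weakly chordal iff $\overline{G}$ is.

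For the inductive step, suppose $n \ge 2$ and assume as the induction hypothesis that $G_{n-1}$ is weakly chordal. By the recursive construction, $G_n$ is obtained from $G_1$ by substituting a copy of $G_{n-1}$ into each of four designated vertices of $G_1$. Since $G_1$ is weakly chordal (the base case) and $G_{n-1}$ is weakly chordal (the induction hypothesis), I would apply Theorem \ref{thm:subs} to conclude that each single substitution preserves weak chordality. The key observation is that substituting into several vertices can be realized as a sequence of single-vertex substitutions: substitute $G_{n-1}$ into the first vertex to obtain a weakly chordal graph, then substitute $G_{n-1}$ into the second vertex of the resulting graph, and so on. Each intermediate graph is weakly chordal by Theorem \ref{thm:subs}, so after all four substitutions $G_n$ is weakly chordal.

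The one point requiring care is to confirm that performing the four substitutions sequentially genuinely yields the same graph $G_n$ as the simultaneous substitution described in the construction, and that Theorem \ref{thm:subs} applies at each stage. For this I would note that the four substitution vertices of $G_1$ are distinct and that substituting into one of them does not alter the adjacencies governing the substitution into another; the vertices being replaced in later steps are original vertices of $G_1$ that survive the earlier substitutions, and their neighbourhoods toward the remaining substitution sites are unchanged. Thus each step is a legitimate single-vertex substitution of the weakly chordal graph $G_{n-1}$ into a weakly chordal graph, and Theorem \ref{thm:subs} applies verbatim.

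The main obstacle I anticipate is entirely in the base case: Theorem \ref{thm:subs} reduces the inductive machinery to a triviality, so the real content is checking by hand that the specific eight-vertex graph $G_1$ has no induced hole and no induced antihole. Everything else is a clean induction, and I would expect the write-up of the inductive step to be only a sentence or two once the base case is secured.
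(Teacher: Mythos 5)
Your proposal is correct and follows essentially the same route as the paper: induction on $n$, with the base case $G_1$ checked directly and the inductive step handled by Theorem \ref{thm:subs}. In fact you are slightly more careful than the paper, which applies Theorem \ref{thm:subs} to the four simultaneous substitutions without spelling out that they can be realized as a sequence of single-vertex substitutions, each preserving weak chordality --- a point your write-up makes explicit.
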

\begin{proof}
The proof is by induction on $n$. It is easy to verify that $G_1$ is weakly chordal and so the statement holds for $n=1$. By the induction hypothesis, $G_{n-1}$ is a weakly chordal graph. The graph $G_n$ is constructed by substituting $G_{n-1}$ into 4 vertices of $G_1$. Since $G_1$ and $G_{n-1}$ are both weakly chordal graphs, it follows from Theorem \ref{thm:subs} that $G_n$ is a weakly chordal graph.
\end{proof}

We are now ready to prove Theorem \ref{thm:main}, which follows from Lemma \ref{lem:col} and \ref{lem:froz}. Recalling the notation used in Figure \ref{fig:construction}, note that in $G_n$ and for $v \in \{w,x,y,z\}$, $v$ is complete to exactly three copies of $G_{n-1}$ and anticomplete to the other copy of $G_{n-1}$. For $v \in \{w,x,y,z\}$, let $G_{n-1}^v$ denote the copy of $G_{n-1}$ in $G_n$ that is anticomplete to $v$.

\begin{lemma}
\label{lem:col}
For all $n \ge 1$, $\chi(G_n) = \omega(G_n)=2n+1$.  
\end{lemma}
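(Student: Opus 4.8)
The plan is to prove the two equalities $\chi(G_n) = \omega(G_n) = 2n+1$ by induction on $n$, establishing the lower bound via a clique and the upper bound via an explicit colouring. Since $\chi(G) \ge \omega(G)$ always holds, it suffices to exhibit a clique of size $2n+1$ (giving $\omega(G_n) \ge 2n+1$) together with a $(2n+1)$-colouring (giving $\chi(G_n) \le 2n+1$); the sandwiching inequality $2n+1 \le \omega(G_n) \le \chi(G_n) \le 2n+1$ then forces both quantities to equal $2n+1$.

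For the base case $n=1$, I would read off the answer directly from Figure~\ref{fig:g1}: the displayed 3-colouring shows $\chi(G_1) \le 3$, and one exhibits a triangle in $G_1$ (so $\omega(G_1) \ge 3$), giving $\chi(G_1) = \omega(G_1) = 3 = 2\cdot 1 + 1$. For the inductive step, I would use the recursive construction of $G_n$, obtained by substituting a copy of $G_{n-1}$ into each of four designated vertices $w,x,y,z$ of $G_1$. The key structural fact, recalled just before the lemma, is that each of $w,x,y,z$ is complete to exactly three of the four copies of $G_{n-1}$ and anticomplete to the fourth copy $G_{n-1}^{v}$.

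For the clique lower bound, I would locate a largest clique of $G_1$ and track what happens to it under substitution: when a vertex $v$ carrying a clique is replaced by $G_{n-1}$, a clique of $G_1$ through $v$ combines with a maximum clique of $G_{n-1}$ (of size $2(n-1)+1 = 2n-1$ by induction, since substitution makes $G_{n-1}$ complete to the retained neighbours) to build a clique of size $2n+1$ in $G_n$. The arithmetic to verify is that the relevant clique of $G_1$ contributes exactly two extra vertices on top of the $2n-1$ coming from the inner copy. For the colouring upper bound, I would give a $(2n+1)$-colouring explicitly: colour the four copies of $G_{n-1}$ using the inductive $(2n-1)$-colourings on disjoint or carefully coordinated palettes, reserving two fresh colours to handle the vertices $w,x,y,z$ and to separate copies that are mutually complete, exploiting the fact that each $v\in\{w,x,y,z\}$ is anticomplete to $G_{n-1}^{v}$ so that $v$ may safely reuse colours appearing in that copy.

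The main obstacle will be the colouring step: I must check that the proposed palette assignment is a proper colouring across \emph{all} the edges introduced by substitution — in particular the complete joins between each $v$ and the three copies it sees, and any complete joins between distinct copies inherited from adjacencies in $G_1$. The anticompleteness of $v$ to $G_{n-1}^{v}$ is the crucial slack that lets the two reserved colours suffice rather than needing more; verifying that two extra colours really are enough (and not three or four) is where the adjacency bookkeeping of $G_1$ must be handled with care. I expect the clique bound to be comparatively routine once the correct clique of $G_1$ is identified, so the colouring construction and its verification is where I would concentrate the argument.
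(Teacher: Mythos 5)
Your skeleton --- induction on $n$, a clique of size $2n+1$ for the lower bound, an explicit $(2n+1)$-colouring for the upper bound, and the sandwich $2n+1 \le \omega(G_n) \le \chi(G_n) \le 2n+1$ --- is exactly the paper's proof, and your base case and clique step match as well: the paper takes a maximum clique $K$ of $G_{n-1}^z$ (size $2n-1$ by induction) and adds $w$ and $x$, using the facts that $wx \in E(G_n)$ and that $\{w,x\}$ is complete to $G_{n-1}^z$.

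However, your colouring step has a genuine gap, and the mechanism you lean on to close it is the wrong one. First, ``disjoint\dots palettes'' cannot work: four copies on disjoint palettes would already use $4(2n-1) > 2n+1$ colours. The correct coordination is that all four copies are coloured \emph{identically} with the same $2n-1$ colours, and this is legitimate precisely because the four copies are pairwise anticomplete (the four substituted vertices form a stable set in $G_1$) --- this is exactly the ``complete joins between distinct copies'' worry you raise but never settle, and settling it is the whole point. Second, once the copies share a palette, every one of the $2n-1$ colours appears in every copy (since $\chi(G_{n-1}) = 2n-1$), and each of $w,x,y,z$ is complete to three of the copies; so, contrary to your claim, a hub vertex $v$ can \emph{not} reuse any colour appearing in $G_{n-1}^v$ --- the anticompleteness of $v$ to its own copy buys you nothing in this lemma (it is what drives the frozen-colouring argument of Lemma \ref{lem:froz}, not this one). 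The actual reason two fresh colours suffice for the four hub vertices is different and never appears in your proposal: $wz \notin E(G_n)$ and $xy \notin E(G_n)$, so one sets $\alpha(w)=\alpha(z)=2n$ and $\alpha(x)=\alpha(y)=2n+1$. Without these two structural facts --- pairwise anticompleteness of the copies, and the two non-edges among $\{w,x,y,z\}$ --- the budget of $2n+1$ colours cannot be met, so as written your plan does not go through.
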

\begin{proof}
The proof is by induction on $n$. The statement holds for $n=1$ since $G_1$ is 3-colourable and contains a clique of size 3 (see Figure \ref{fig:g1}). By the induction hypothesis, $\chi(G_{n-1})=\omega(G_{n-1})=2n-1$. Fix a $(2n-1)$-colouring $\alpha$ of $G_{n-1}$. We show how to extend $\alpha$ to a $(2n+1)$-colouring of $G_n$. Since each copy of $G_{n-1}$ is pairwise anticomplete, we can colour each copy of $G_{n-1}$ identically using $\alpha$. To complete this colouring of $G_n$, we make $\alpha(w)=\alpha(z)=2n$ and $\alpha(x)=\alpha(y)=2n+1$. Since $wz, xy \notin E(G)$, this gives a proper $(2n+1)$-colouring of $G_n$. To find a clique of size $2n+1$ in $G_n$, take a clique $K$ of size $2n-1$ in $G_{n-1}^z$. Then since $wx \in E(G)$ and since $\{w,x\}$ is complete to $G_{n-1}^z$, it follows that $K \cup \{w,x\}$ is a clique of size $2n+1$ in $G_n$.
\end{proof}

\begin{lemma}
\label{lem:froz}
For all $n \ge 1$, $G_n$ has a frozen $(3n+1)$-colouring.
\end{lemma}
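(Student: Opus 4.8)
The plan is to prove Lemma~\ref{lem:froz} by induction on $n$, constructing an explicit frozen $(3n+1)$-colouring of $G_n$ and verifying that it is frozen, i.e. that every colour appears in the closed neighbourhood of every vertex. For the base case $n=1$, I would simply point to the frozen $4$-colouring of $G_1$ exhibited in Figure~\ref{fig:g1}, checking by inspection that each of the four colours appears in the closed neighbourhood of each of the eight vertices (this is the routine verification the figure is designed to make transparent).

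For the inductive step, suppose $G_{n-1}$ has a frozen $(3n-2)$-colouring $\beta$. Recall that $G_n$ is built by substituting a copy of $G_{n-1}$ into each of the four vertices $w,x,y,z$ of $G_1$, and that for $v \in \{w,x,y,z\}$ the copy $G_{n-1}^v$ is precisely the copy anticomplete to $v$ while $v$ is complete to the other three copies. The idea is to colour all four copies of $G_{n-1}$ using the \emph{same} frozen colouring $\beta$ on the shared palette $\{1,\dots,3n-2\}$, and then to colour the four "skeleton" vertices $w,x,y,z$ using three \emph{fresh} colours $\{3n-1,3n,3n+1\}$ in the same pattern that makes the base colouring of $G_1$ frozen. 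Since the fresh colours are disjoint from the palette used inside the copies, and since adjacent copies together with the skeleton inherit the adjacency pattern of $G_1$, this should yield a proper $(3n+1)$-colouring.

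The heart of the argument is checking the frozen condition, and this splits into two kinds of vertices. For a vertex $u$ lying inside some copy $G_{n-1}^v$: the colours $\{1,\dots,3n-2\}$ all appear in the closed neighbourhood of $u$ \emph{within its own copy} because $\beta$ is frozen on $G_{n-1}$; and the three fresh colours $\{3n-1,3n,3n+1\}$ must appear on the skeleton vertices adjacent to the copy. Here I must use that the copy $G_{n-1}^v$ is complete to the three skeleton vertices other than $v$, and argue that those three skeleton vertices carry all three fresh colours---this is exactly the property I should arrange when choosing how to assign $\{3n-1,3n,3n+1\}$ to $\{w,x,y,z\}$. For a skeleton vertex $v$ itself: its closed neighbourhood contains three full copies of $G_{n-1}$ (all those it is complete to), each coloured by the surjective-onto-$\{1,\dots,3n-2\}$ colouring $\beta$, so every old colour appears; and the remaining fresh colours appear on the skeleton vertices adjacent to $v$, again by the $G_1$-frozen pattern.

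I expect the main obstacle to be the bookkeeping of the skeleton colouring: I must choose the assignment of the three fresh colours to $w,x,y,z$ so that \emph{simultaneously} (a) it is a proper colouring of the $G_1$-skeleton respecting $wz,xy \notin E$ and the remaining edges, (b) it is frozen on the skeleton in the $G_1$ sense, and (c) for each $v$, the three skeleton vertices complete to $G_{n-1}^v$ collectively display all three fresh colours so that the interior vertices see every fresh colour. Matching these three constraints against the concrete edge-structure of $G_1$ shown in Figure~\ref{fig:construction} is the delicate step; once the right pattern is fixed, the verification reduces to the two neighbourhood checks above, which are then essentially mechanical given that $\beta$ is frozen and surjective.
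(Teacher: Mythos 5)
Your plan has a genuine gap, and it sits exactly at the point you flag as delicate: condition (c) of your own checklist is unsatisfiable, so the colouring you propose is never frozen. If all four copies of $G_{n-1}$ carry the same frozen colouring $\beta$ on the common palette $\{1,\dots,3n-2\}$, then each skeleton vertex, being complete to three copies (and a frozen colouring necessarily uses every colour of its palette), is forced to take one of the three fresh colours $3n-1,3n,3n+1$. All three fresh colours must actually appear on the skeleton (a colour used nowhere would lie in no closed neighbourhood, and every vertex could be recoloured to it), so with four skeleton vertices exactly one pair shares a fresh colour, and properness forces that pair to be $\{w,z\}$ or $\{x,y\}$; say $\alpha(w)=\alpha(z)$. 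Then for $v=x$ (and likewise $v=y$) the three skeleton vertices $w,y,z$ complete to $G_{n-1}^x$ display only two distinct fresh colours, so the colour $\alpha(x)$ appears nowhere in the closed neighbourhood of any vertex $u$ of $G_{n-1}^x$: it appears only on $x$ itself, which is anticomplete to $G_{n-1}^x$, and so every such $u$ can be recoloured with $\alpha(x)$. The same counting shows no assignment can work: you would need $\{w,x,y,z\}\setminus\{v\}$ to display three distinct colours for all four choices of $v$, which is impossible when only three colours are spread over four vertices. Note also that your template misreads Figure \ref{fig:g1}: in the frozen $4$-colouring of $G_1$ the four skeleton vertices receive four \emph{distinct} colours, each shared with the singleton copy anticomplete to it, not three colours disjoint from those used on the copies.

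The paper's proof repairs precisely this defect by giving the four copies \emph{different} (overlapping) palettes rather than a common one. Set $\alpha(w)=3n-2$, $\alpha(x)=3n-1$, $\alpha(y)=3n$, $\alpha(z)=3n+1$, and colour $G_{n-1}^v$ with a frozen $(3n-2)$-colouring on the palette $\{1,\dots,3n-3\}\cup\{\alpha(v)\}$; this is proper because the colour $\alpha(v)$ is reused only inside the one copy that is anticomplete to $v$, mirroring the actual pattern of Figure \ref{fig:g1}. Then an interior vertex $u$ of $G_{n-1}^v$ sees all $3n-2$ colours of its copy's palette within its closed neighbourhood inside the copy, and the three missing colours are exactly the colours of $\{w,x,y,z\}\setminus\{v\}$, each complete to $G_{n-1}^v$; a skeleton vertex $v$ sees, on the three copies it is complete to, every colour except its own $\alpha(v)$. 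Hence all $3n+1$ colours meet every closed neighbourhood, the colouring is frozen, and the induction closes. The rest of your outline (base case by inspection, induction, the two-case neighbourhood check) matches the paper; it is only this palette bookkeeping that must change.
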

\begin{proof}
The proof is by induction on $n$. The statement holds for $n=1$ since $G_1$ has a frozen 4-colouring (see Figure \ref{fig:g1}). By the induction hypothesis, $G_{n-1}$ has a frozen $(3n-2)$-colouring. To construct a frozen $(3n+1)$-colouring $\alpha$ of $G_n$, we take a frozen $(3n-2)$-colouring of each copy of $G_{n-1}$ in $G_n$ using a different set of colours. 

For $v \in \{w,x,y,z\}$, let $\alpha^v$ denote the colouring of $G_n$ restricted to the subgraph $G_{n-1}^v$. Let $\alpha^w$ be a frozen $(3n-2)$-colouring of $G_{n-1}^w$ using the colours $\{1, 2, \ldots, 3n-2\}$. Let $\alpha^x$, $\alpha^y$, $\alpha^z$ be frozen $(3n-2)$-colourings of $G_{n-1}^x$, $G_{n-1}^y$, $G_{n-1}^z$ using the colours $\{1, 2, \ldots, 3n-3, 3n-1\}$, $\{1, 2, \ldots, 3n-3, 3n\}$, $\{1, 2, \ldots, 3n-3, 3n+1\}$, respectively. Since each each copy of $G_{n-1}$ is pairwise anticomplete, this creates no conflicts. To complete this colouring of $G_n$, make $\alpha(w)=3n-2$, $\alpha(x)=3n-1$, $\alpha(y)=3n$, and $\alpha(z)=3n+1$. Note that for each $v \in \{w, x, y, z\}$, $\alpha(v)$ only appears on $v$ and in $G_{n-1}^v$. Since $v$ is anticomplete to $G_{n-1}^v$, this creates no conflicts. Therefore, $\alpha$ is a proper $(3n+1)$-colouring of $G_n$. 

To see that $\alpha$ is a frozen colouring, first examine a vertex $u$ in $G_{n-1}^v$ for $v \in \{w,x,y,z\}$. By construction, there are $3n-2$ colours appearing on the closed neighbourhood of $u$ in $G_{n-1}^v$. Also by construction, the remaining 3 colours are used to colour $\{w,x,y,z\} \setminus \{v\}$. Since each of $\{w,x,y,z\} \setminus \{v\}$ is complete to $G_{n-1}^v$, all $3n+1$ colours appear on the closed neighbourhood of $u$ and it cannot be recoloured. Now examine vertex $v \in \{w,x,y,z\}$. Since $v$ is complete to each $G_{n-1}^u$ for $u \in \{w,x,y,z\} \setminus \{v\}$, there are $3n$ colours appearing on the open neighbourhood of $v$. Since $\alpha$ is a proper colouring, the last colour is being used to colour $v$ and so it cannot be recoloured.
\end{proof}

\begin{figure}
\centering
\begin{tikzpicture}[scale=0.12]
\tikzstyle{vertex}=[circle, draw, fill=black, inner sep=0pt, minimum size=5pt]
\tikzstyle{node}=[circle, draw, fill=black, inner sep=0pt, minimum size=8pt]
\tikzstyle{blowup}=[circle, draw, fill=white, inner sep=0pt, minimum size=40pt]
    
    \draw[line width=2] (0, 0) ellipse (20cm and 20cm);
    \draw[line width=2] (0, 0) ellipse (10cm and 10cm);
    \draw (0, 0) ellipse (20cm and 10cm);
    
    \node[node, label=left:$w$](40) at (-20,0) {};
    \node[node, label=below:$x$](41) at (0,10) {};
    \node[node, label=above:$y$](42) at (0,-10) {};
    \node[node, label=right:$z$](43) at (20,0) {};
    
    \node[blowup](44) at (0,20){$G_{n-1}^y$};
    \node[blowup](45) at (-10,0){$G_{n-1}^z$};
    \node[blowup](46) at (10,0){$G_{n-1}^w$};
    \node[blowup](47) at (0,-20){$G_{n-1}^x$};
    
    \draw[line width=2](40)--(45);
    \draw[line width=2](41)--(44);
    \draw[line width=2](42)--(47);
    \draw[line width=2](43)--(46);
    
\end{tikzpicture}
\caption{The graph $G_n$. A bold line indicates that all possible edges are present.}
\label{fig:construction}
\end{figure}
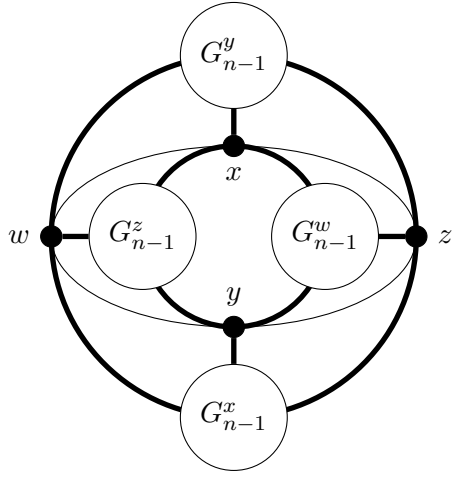

\section{Recolouring the complement of triangle-free graphs}
\label{sec:3k1}

In this section we prove Theorem \ref{thm:3k1}. Note that in any colouring of a $3K_1$-free graph at most two vertices share the same colour. With this in mind, it is not hard to see that an optimal colouring of a $3K_1$-free graph can be found in polynomial time by finding a maximum matching in the complement. We begin by proving the following lemma.

\begin{lemma}
\label{lem:useful}
Let $G$ be a $k$-colourable $3K_1$-free graph. In any $(k+1)$-colouring of $G$, there exists a colour $c$ that either does not appear in $G$ or is used to colour exactly one vertex of $G$.
\end{lemma}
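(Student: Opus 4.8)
The plan is to reason about the structure of a $(k+1)$-colouring of a $3K_1$-free graph by counting how colour classes must be distributed. The key observation already noted in the text is that in any proper colouring of a $3K_1$-free graph, every colour class has size at most $2$: a colour class is a stable set, and a $3K_1$-free graph has no stable set of size $3$. So each of the $k+1$ colours appears on either $1$ or $2$ vertices (or, in principle, $0$ vertices, but a colour used in a proper $(k+1)$-colouring appears on at least $1$ vertex unless we permit unused colours, which is exactly the alternative in the statement).

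First I would suppose, for contradiction, that the conclusion fails. That means there is a $(k+1)$-colouring $\alpha$ of $G$ in which every one of the $k+1$ colours appears and no colour is used on exactly one vertex. Combined with the size-at-most-$2$ bound, this forces every colour class to have size \emph{exactly} $2$. Hence $|V(G)| = 2(k+1) = 2k+2$, and the $k+1$ colour classes form a partition of $V(G)$ into $k+1$ stable sets each of size $2$.

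Next I would use the hypothesis that $G$ is \emph{$k$-colourable} to derive a contradiction with this rigid structure. In a proper $k$-colouring of $G$, again every colour class has size at most $2$, so $G$ can be covered by at most $k$ stable sets each of size at most $2$, giving $|V(G)| \le 2k$. But we just showed $|V(G)| = 2k+2 > 2k$, a contradiction. Therefore no such colouring $\alpha$ exists, and in every $(k+1)$-colouring some colour is either absent or used exactly once.

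The argument is essentially a double-counting / pigeonhole comparison between what a $(k+1)$-colouring forces and what $k$-colourability permits, so I do not anticipate a serious obstacle. The one point to state carefully is the "size at most $2$" fact and its role: it is used once to conclude each class in the hypothetical $\alpha$ has size exactly $2$, and once to bound $|V(G)| \le 2k$ from $k$-colourability. If one wanted to avoid the assumption that every colour in a $(k+1)$-colouring is used, the cleanest phrasing is to allow a colour class to be empty, in which case that colour does not appear in $G$ and the lemma holds immediately; otherwise all $k+1$ classes are nonempty and the counting above applies.
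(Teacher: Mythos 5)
Your proof is correct and follows essentially the same counting argument as the paper: assume every colour appears at least twice, use $3K_1$-freeness to bound colour classes by size $2$, and compare $|V(G)| \ge 2(k+1)$ against the bound $|V(G)| \le 2k$ coming from $k$-colourability (the paper phrases this via $|V(G)| \le 2\chi(G)$ and $\chi(G) \le k$, but it is the same pigeonhole comparison).
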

\begin{proof}
Let $G$ be as in the statement of the lemma and fix some $(k+1)$-colouring of $G$. We can assume that all $k+1$ colours appear on the vertices of $G$ since, if not, the first condition is satisfied. Now by contradiction assume that all $k+1$ colours appear twice on the vertices of $G$. We know that $|V(G)| \le 2\chi(G)$ since we can partition the vertices of $G$ into at most $\chi(G)$ stable sets, each having at most two vertices. But since all $k+1$ colours appear twice on the vertices of $G$, we have $|V(G)| \ge 2(k+1) > 2\chi(G)$, a contradiction.
\end{proof}

Let $\gamma$ be a $\chi(G)$-colouring of $G$ and let $\mathcal{C}$ be the partition of the vertices of $G$ given by the colour classes of $\gamma$. Given two colourings $\alpha$ and $\beta$ of $G$, our strategy is to first recolour each to a $\chi(G)$-colouring $\alpha'$ and $\beta'$ whose colour classes correspond exactly to the partition $\mathcal{C}$, and then use the following Renaming Lemma. 

\begin{lemma}[Renaming Lemma \cite{bonamy2018}]
\label{lem:recolour}
If $\alpha'$ and $\beta'$ are two $k$-colourings of $G$ that induce the same partition of vertices into colour classes, then $\alpha'$ can be recoloured into $\beta'$ in $\mathcal{R}_{k+1}(G)$ by recolouring each vertex at most 2 times.
\end{lemma}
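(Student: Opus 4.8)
The plan is to exploit the single spare colour available in $\mathcal{R}_{k+1}(G)$ as a temporary buffer. First I would observe that because $\alpha'$ and $\beta'$ induce the same partition of $V(G)$ into colour classes, the two colourings differ only by a relabelling of colours: assigning to each nonempty class the colour it receives under $\alpha'$ and under $\beta'$ gives two injections into $\{1, \ldots, k\}$, and composing one with the inverse of the other yields a permutation $\pi$ of $\{1, \ldots, k\}$ with $\beta'(v) = \pi(\alpha'(v))$ for all $v \in V(G)$. Since both colourings use only colours in $\{1, \ldots, k\}$, the colour $k+1$ is unused under $\alpha'$, and it will serve as the buffer.

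Next I would decompose $\pi$ into disjoint cycles; fixed points require no recolouring, so it suffices to realise each nontrivial cycle while leaving the others untouched, reusing the buffer colour $k+1$ for each cycle in turn. For a single cycle sending colour $c_i$ to $c_{i+1}$ (indices taken modulo the cycle length $m$), the class $C_i$ currently coloured $c_i$ must end coloured $c_{i+1}$. I would first recolour the entire class $C_1$ to the buffer $k+1$, freeing colour $c_1$; then cascade, recolouring $C_m$ to the now-free $c_1$, then $C_{m-1}$ to the now-free $c_m$, and so on down to $C_2 \to c_3$; finally I would move $C_1$ out of the buffer to its target $c_2$, which is free by then. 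Each class is a stable set, so it can be recoloured one vertex at a time, and at every single-vertex step the target colour is absent from the rest of the graph, so properness is preserved throughout; the whole sequence is therefore a walk in $\mathcal{R}_{k+1}(G)$.

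Finally I would count recolourings. Within each cycle every class is recoloured exactly once, except $C_1$, which is recoloured twice (into and out of the buffer); since the cycles are disjoint and each vertex lies in exactly one class, no vertex is recoloured more than twice in total. The step I expect to require the most care is the cascade: one must order the single-class recolourings so that the target colour of each class is vacated precisely before it is needed, and one must verify that a single buffer colour suffices to implement an arbitrary permutation within the ``at most two recolourings per vertex'' budget. A naive colour-by-colour implementation can exceed this budget, so routing the permutation through disjoint cyclic cascades (rather than through arbitrary transpositions) is the crux of the argument.
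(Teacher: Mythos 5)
Your proof is correct. The paper itself does not prove this lemma---it is quoted from Bonamy and Bousquet \cite{bonamy2018}---and your argument (extend the class-relabelling to a permutation of $\{1,\ldots,k\}$, decompose it into disjoint cycles, and rotate each cycle through the spare colour $k+1$ so that only the first class of each cycle is recoloured twice) is essentially the standard proof given in that reference.
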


\begin{proof}[Proof of Theorem \ref{thm:3k1}]
Let $G$ be a $k$-colourable $3K_1$-free graph and let $\alpha$ and $\beta$ be two $(k+1)$-colourings of $G$. Fix a $\chi(G)$-colouring $\gamma$ of $G$ and let $\mathcal{C}$ be the partition of $V(G)$ given by the colour classes of $\gamma$. Note that $|\mathcal{C}|=\chi(G)$ and each colour class $C \in \mathcal{C}$ has one or two vertices. 

\begin{claim}
The colouring $\alpha$ can be recoloured into a $\chi(G)$-colouring $\alpha'$ of $G$ such that $\alpha'$ and $\gamma$ partition the vertices of $G$ into the same colour classes by recolouring each vertex at most once.
\end{claim}

We prove the claim by induction on $\chi(G)$. For $\chi(G)=1$ the claim is trivial. Now assume the statement holds for $\chi(G)-1$. By Lemma \ref{lem:useful}, there exists some colour $c$ of $\alpha$ that either does not appear in $G$ or appears on exactly one vertex of $G$.

First suppose the colour $c$ appears in $G$ and let $u$ be the vertex coloured $c$. Let $C$ be the colour class of $\gamma$ which contains $u$. If $C$ contains some other vertex $v$ then, from $\alpha$, recolour $v$ with $c$. If instead $c$ does not appear in $G$, we select $u$, $v$, and $C$ as follows. Take some colour class of $\alpha$ that is not a colour class of $\gamma$ (if no such colour class exists we are done) and some vertex $u$ in this colour class. From $\alpha$, recolour $u$ with the colour $c$. Let $C$ be the colour class of $\gamma$ which contains $u$. If there is another vertex $v \in C$ then recolour $v$ to the colour $c$. This can be done since $uv \notin E$ and no other vertex is coloured $c$. 

Let $\alpha_C$ be the current colouring of $G$ restricted to $G - C$ with $c$ taken out of its set of colours. Let $\gamma_C$ be the colouring $\gamma$ restricted to $G - C$.

Since $\gamma$ is a $\chi(G)$-colouring of $G$, it follows that $\chi(G - C) = \chi(G)-1$. Then $\alpha_C$ is a $k$-colouring of $G-C$ (since we removed the colour $c$) and $k \ge \chi(G-C) + 1$. By the induction hypothesis, $\alpha_C$ can be recoloured into a $(\chi(G)-1)$-colouring $\alpha_C'$ of $G-C$ such that $\alpha_C'$ and $\gamma_C$ partition the vertices of $G$ into the same colour classes by recolouring each vertex at most once. Since the colour of $u$ and $v$ are never used again, this recolouring sequence from $\alpha_c$ to $\alpha_c'$ can be extended to a recolouring sequence between $\alpha$ and $\alpha'$. Since $u$ and $v$ are recoloured at most once, each vertex of $G$ is recoloured at most once. This completes the proof of the claim.

Similarly, $\beta$ can be recoloured into a $\chi(G)$-colouring $\beta'$ such that $\beta'$ and $\gamma$ partition the vertices of $G$ into the same colour classes by recolouring each vertex at most once. By Lemma \ref{lem:recolour}, we can recolour $\alpha'$ into $\beta'$ by recolouring each vertex at most twice. This gives us a recolouring sequence from $\alpha$ to $\beta$ by recolouring each vertex at most 4 times.
\end{proof}


\section{Conclusion}
\label{sec:conc}
In this paper, we answered an open question of Feghali and Fiala by showing that for all $n \ge 1$, there exists a $k$-colourable weakly chordal graph with a frozen $(k+n)$-colouring. We also showed that every $k$-colourable $3K_1$-free graph is $(k+1)$-mixing with a linear $(k+1)$-recolouring diameter. It is an open problem whether a $k$-colourable $P_5$-free graph is $(k+1)$-mixing \cite{merkel2020}. This question has been answered for several subclasses of $P_5$-free graphs. These include when $k=2$ \cite{bonamy2014}, for co--chordal graphs, for ($P_5$, $\overline{P_5}$, $C_5$)-free graphs and $k=3$ \cite{feghali2020}, for $P_4$-sparse graphs \cite{biedl2020}, and now for $3K_1$-free graphs. It may be hard to answer this question for the entire class of $P_5$-free graphs and so it would be interesting to continue studying subclasses of $P_5$-free graphs for which this question can be answered.

\section*{Acknowledgements}
The author thanks Carl Feghali for comments and discussion that greatly improved the paper. The author was partially supported by Natural Sciences and Engineering Research Council of Canada (NSERC) grant RGPIN-2016-06517.

\end{document}